\newtheorem{thm}{Theorem}[section]
\newtheorem{lemma}{Lemma}[section]
\newtheorem{rem}{Remark}[section]
\theoremstyle{definition}
\theoremstyle{remark}
\newcommand{\R}{{\mathbb R}}
\newcommand{\N}{{\mathbb N}}
\newcommand{\noi}{\noindent}
\numberwithin{equation}{section}
\begin{document}
\title[nD Kuramoto-Sivashinsky]	
{	Global Regular Solutions for the multi-dimensional Kuramoto-Sivashinsky equation posed on smooth domains
}
\author{N. A. Larkin}

\address
{
	Departamento de Matem\'atica, Universidade Estadual
	de Maring\'a, Av. Colombo 5790: Ag\^encia UEM, 87020-900, Maring\'a, PR, Brazil
}

\thanks
{
MSC 2010:35Q35; 35Q53.\\
	Keywords: Kuramoto-Sivashinsky equation, Global solutions; Decay in Bounded domains
	}
\bigskip

\email{ nlarkine@uem.br;nlarkine@yahoo.com.br }
\date{}

\begin{abstract}Initial- boundary value problems for the n-dimensional ($n$ is a natural number from the interval [2,7]) Kuramoto-Sivashinsky equation posed on  smooth bounded  domains in $\R^n$  were considered. The existence and uniqueness of  global  regular  solutions  as well as their exponential decay  have been established.
\end{abstract}

\maketitle

\section{Introduction}\label{introduction}
This work concerns the existence, uniqueness, regularity  and  exponential decay rates of solutions to initial-boundary value problems for the $n$-dimensional Kuramoto-Sivashinsky  equation (KS) 
\begin{align}
	& \phi_t+\Delta^2 \phi+ \Delta \phi +\frac{1}{2}|\nabla \phi|^2=0.
\end{align}
Here  $n$ is a natural number from the interval [2,7], $\Delta$ and $\nabla$ are the Laplacian and the gradient in $\R^n.$
In \cite{kuramoto}, Kuramoto studied the turbulent phase waves  and Sivashinsky in \cite{sivash} obtained an asymptotic equation which simulated the evolution of a disturbed plane flame front. See also \cite{Cross}.
Mathematical  results on  initial and initial- boundary value problems for one-dimensional (1.1)  are presented in \cite{Iorio, Guo,cousin,Larkin2,temam1,otto,temam2,zhang}, see references cited there for more information. In
\cite{gramchev,Guo}, the initial value problem for the multi-dimensional (1.1) type equations has been considered.
Two-dimensional periodic problems for the K-S- equation and its modifications posed on rectangles were studied in \cite{kukavica,molinet,temam1,sell,temam2}, where some results on the existence of weak solutions and nonlinear stability have been established. In \cite{Larkin}, initial-boundary value problems for the 3D Kuramoto-Sivashinsky equation have been studied; the existence, uniqueness and exponential decay of global regular solutions have been proved.\\ 
For $n$ dimensions, $ x=(x_1,...,x_n)$, $n=2,3,4,5,6,7$, (1.1) can be rewritten in the form of the following system:

 \begin{align}
	(u_j)_t+\Delta^2 u_j+\Delta u_j +\frac{1}{2}\sum _{i=1}^n(u_i)^2_{x_j}=0,\; j=1,...,n,&   \\
	(u_i)_{x_j}=(u_j)_{x_i},\; j\ne i,\;\; i,j=1,...,n.
	\end{align}
where $u_j=(\phi)_{x_j},\;j=1,...,n.$ Let $\Omega_n= \prod_{i=1}^n(0,L_i)$ be the minimal nD parallelepiped containing a given smooth domain $\Bar{D_n}.$
 First essential problem that arises while one studies either (1.1) or (1.2)-(1.3), are  destabilizing effects of 
$\Delta u_j;$ they may be damped by dissipative terms $\Delta^2 u_j$ provided $D_n$ has some specific properties. In order to understand this, we use Steklov`s inequalities to estimate

$$a\|u_j\|^2\leq \|\nabla u_j\|^2,\;a\|\nabla u_j\|^2\leq \|\Delta u_j\|^2;\;a=\sum_{i=1}^n\frac{\pi^2}{L_i^2},\; \;j=1,...,n.$$

A simple analysis shows that if
\begin{equation}
1-\frac{1}{a}>0,
\end{equation} then $\Delta^2 u_j$ damp $\Delta u_j$.  Naturally, here appear  admissible domains where (1.4) is fulfilled, so called "thin domains", 
where  some $L_i$ are   sufficiently small while others $L_j$ may be  large $ i,j=1,...,7; \;i\ne j.$

Second essential problem is presence of semi-linear terms in (1.2) which are interconnected. This does not allow to obtain the first estimate independent of $u_j$ and leads to a connection between $L_i$ and $u_j(0), \;i,j=1,...,7.$ \\
Our aim in this work is to study n-dimensional initial-boundary value problems for (1.2)-(1.3) posed on smooth domains, where the existence and uniqueness of global regular solutions as well as their exponential decay  of the $H^2(D_n)$-norm have been established. Moreover, we obtained a "smoothing effect"  for solutions with respect to initial data. Although, cases $n=2,3$ are not new, we included them for the sake of generality.

This work has the following structure: Section I is Introduction. Section 2 contains notations and auxiliary facts. In Section 3, formulation of an initial-boundary value problem  in a smooth bounded domain $D_n$ is given. The existence and uniqueness of global regular solutions, exponential decay of the $H^2(D_n)$-norm and a "smoothing effect" have been established.  Section 4 contains conclusions.

\section{Notations and Auxiliary Facts}

let $D_n$ be a suffuciently smooth domain in $\R^n,$ where $n$ is a fixed natural number, $n\in [2,7]$ satisfying
 the Cone condition, \cite{Adams}, and $x=(x_1,...,x_n) \in D_n.$ We use the standard notations of Sobolev spaces $W^{k,p}$, $L^p$ and $H^k$ for functions and the following notations for the norms \cite{Adams}
for scalar functions $f(x,t):$
$$\| f \|^2 = \int_{D_n} | f |^2dx, \hspace{1cm} \| f \|_{L^p(D_n)}^p = \int_{D_n} | f  |^p\, dx,$$
$$\| f \|_{W^{k,p}(D_n)}^p = \sum_{0 \leq \alpha \leq k} \|D^\alpha f \|_{L^p(D_n)}^p, \hspace{1cm} \| f \|_{H^k(D_n)} = \| f \|_{W^{k,2}(D_n)}.$$

When $p = 2$, $W^{k,p}(D_n) = H^k(D_n)$ is a Hilbert space with the scalar product 
$$((u,v))_{H^k(D_n)}=\sum_{|j|\leq k}(D^ju,D^jv),\;
\|u\|_{L^{\infty}(D_n)}=ess\; sup_{D_n}|u(x)|.$$
We use a notation $H_0^k(D_n)$ to represent the closure of $C_0^\infty(D_n)$, the set of all $C^\infty$ functions with compact support in $D_n$, with respect to the norm of $H^k(D_n)$.

\begin{lemma}[Steklov's Inequality \cite{steklov}] Let $v \in H^1_0(0,L).$ Then
	\begin{equation}\label{Estek}
	\frac {\pi^2}{L^2}\|v\|^2(t) \leq \|v_x\|^2(t).
	\end{equation}
\end{lemma}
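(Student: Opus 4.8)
The plan is to establish this sharp one-dimensional Poincar\'e inequality by recognizing $\pi^2/L^2$ as the lowest eigenvalue of the Dirichlet Laplacian $-\partial_x^2$ on $(0,L)$, and to make the comparison explicit through a Fourier sine expansion. Throughout, $t$ is treated as a fixed parameter, so that $v=v(\cdot,t)$ is regarded as an element of $H^1_0(0,L)$ for each $t$.

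First I would reduce to a convenient dense class. Since $C_0^\infty(0,L)$ is dense in $H^1_0(0,L)$ and both sides of \eqref{Estek} depend continuously on $v$ in the $H^1(0,L)$-norm, it suffices to prove the inequality for smooth $v$ with compact support and then pass to the limit. For such $v$ I would expand in the orthogonal system $\{\sin(k\pi x/L)\}_{k\ge 1}$, which is complete in $L^2(0,L)$ and consists precisely of the Dirichlet eigenfunctions of $-\partial_x^2$. Writing $v(x)=\sum_{k\ge 1} b_k \sin(k\pi x/L)$, Parseval's identity gives
$$\|v\|^2(t)=\frac{L}{2}\sum_{k=1}^\infty b_k^2 .$$
Differentiating the series term by term, which is legitimate on the smooth class, yields $v_x(x)=\sum_{k\ge 1} b_k\,(k\pi/L)\cos(k\pi x/L)$, and orthogonality of the cosine system produces
$$\|v_x\|^2(t)=\frac{L}{2}\sum_{k=1}^\infty \frac{k^2\pi^2}{L^2}\,b_k^2 .$$

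The inequality then follows at once from the elementary bound $k^2\ge 1$ valid for every integer $k\ge 1$, since
$$\|v_x\|^2(t)=\frac{L}{2}\sum_{k=1}^\infty \frac{k^2\pi^2}{L^2}b_k^2 \;\ge\; \frac{\pi^2}{L^2}\cdot\frac{L}{2}\sum_{k=1}^\infty b_k^2=\frac{\pi^2}{L^2}\|v\|^2(t),$$
with equality exactly when $v$ is a scalar multiple of $\sin(\pi x/L)$, i.e. the first eigenfunction.

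The only point requiring genuine care is the justification of the termwise differentiation together with the density passage to the limit; everything else is a one-line estimate. An alternative that avoids Fourier series altogether is to minimize the Rayleigh quotient $\|v_x\|^2/\|v\|^2$ over $H^1_0(0,L)$: its Euler--Lagrange equation $-v_{xx}=\lambda v$ with $v(0)=v(L)=0$ has least eigenvalue $\lambda_1=\pi^2/L^2$, which gives the same constant and identifies it as sharp. I would favor the Fourier argument since it delivers both bound and extremizer transparently.
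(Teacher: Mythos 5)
Your proof is correct, but note that the paper itself does not prove this lemma at all: it is quoted as a classical fact with a citation to Steklov's 1896 paper and then used as a black box, so there is no internal proof to compare against. Your Fourier-sine argument is the standard modern proof and supplies exactly what the citation hides: expansion in the Dirichlet eigenbasis $\{\sin(k\pi x/L)\}_{k\geq 1}$, Parseval's identity for $v$ and $v_x$, and the elementary bound $k^2\geq 1$. The two technical points you flag are indeed the only ones needing care, and both go through: the odd $2L$-periodic extension of a function in $C_0^\infty(0,L)$ is smooth, so termwise differentiation of its Fourier series is legitimate, and both sides of \eqref{Estek} are continuous in the $H^1(0,L)$-norm, so density of $C_0^\infty(0,L)$ in $H^1_0(0,L)$ finishes the limit passage. (A marginally cleaner variant avoids termwise differentiation: integrate by parts, using $v(0)=v(L)=0$, to see that the cosine coefficients of $v_x$ equal $(k\pi/L)$ times the sine coefficients of $v$ and that $v_x$ has zero mean, then apply Parseval directly to $v_x$.) Your closing observation that the constant $\pi^2/L^2$ is sharp, with extremizer $\sin(\pi x/L)$, is also worth retaining: the paper's admissibility condition $1-\tfrac{1}{a}>0$ in (3.9) and the decay rate in Theorem 3.1 are calibrated precisely to this constant, so knowing it cannot be improved shows the geometric restriction on the domain is not an artifact of a lossy inequality.
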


\begin{lemma}
	[Differential form of the Gronwall Inequality]\label{gronwall} Let $I = [t_0,t_1]$. Suppose that functions $a,b:I\to \R$ are integrable and a function $a(t)$ may be of any sign. Let $u:I\to \R$ be a differentiable function satisfying
	\begin{equation}
		u_t (t) \leq a(t) u(t) + b(t),\text{ for }t \in I\text{ and } \,\, u(t_0) = u_0,
	\end{equation}
	then
	
	\begin{equation}u(t) \leq u_0 e^{ \int_{t_0}^t a(t)\, dt } + \int_{t_0}^t e^{\int_{t_0}^s a(r) \, dr} b(s)\, ds.\end{equation}.
\end{lemma}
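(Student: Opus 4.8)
The plan is to prove this classical estimate by the method of the \emph{integrating factor}, which converts the one-sided differential inequality into a monotonicity statement for an auxiliary product. Since $a$ is integrable on $I$, the function
\[
\mu(t) = \exp\!\left(-\int_{t_0}^t a(r)\,dr\right)
\]
is well defined, strictly positive, and absolutely continuous on $I$, with $\mu'(t) = -a(t)\,\mu(t)$. Strict positivity is the essential structural feature: it guarantees that multiplying an inequality by $\mu(t)$, or later by $1/\mu(t)$, preserves its direction, and this is precisely what allows a bound on $u_t$ to propagate to a bound on $u$.

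First I would form the product $w(t) = \mu(t)\,u(t)$ and differentiate, using $\mu' = -a\mu$ together with the product rule:
\[
w'(t) = \mu(t)\,u_t(t) - a(t)\,\mu(t)\,u(t) = \mu(t)\bigl(u_t(t) - a(t)\,u(t)\bigr).
\]
The hypothesis $u_t(t) \le a(t)\,u(t) + b(t)$ gives $u_t(t) - a(t)\,u(t) \le b(t)$, and since $\mu(t) > 0$ this yields the clean one-sided bound $w'(t) \le \mu(t)\,b(t)$.

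Next I would integrate this over $[t_0,t]$ and insert the initial data $\mu(t_0) = 1$ and $u(t_0) = u_0$, obtaining
\[
\mu(t)\,u(t) \le u_0 + \int_{t_0}^t \mu(s)\,b(s)\,ds.
\]
Multiplying through by $1/\mu(t) = \exp\!\left(\int_{t_0}^t a(r)\,dr\right) > 0$ and combining the two exponentials inside the integral into one then gives
\[
u(t) \le u_0\,\exp\!\left(\int_{t_0}^t a(r)\,dr\right) + \int_{t_0}^t \exp\!\left(\int_s^t a(r)\,dr\right) b(s)\,ds,
\]
which is the asserted bound; I note that the kernel appearing in the integral is the natural one $\exp\!\bigl(\int_s^t a(r)\,dr\bigr)$, as forced by the variation-of-parameters structure.

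As for difficulty, there is no substantive obstacle here: the estimate is elementary once the integrating factor is in place, and the work is almost entirely bookkeeping. The two points that deserve a moment of care are, first, keeping track of the two places where strict positivity of $\mu$ is used so that no inequality is accidentally reversed, and second, justifying the step $\int_{t_0}^t w'(s)\,ds = w(t) - w(t_0)$ under the stated regularity. For the latter it suffices that $w$ be absolutely continuous, which holds under the natural reading of the hypotheses (with $u$ continuously differentiable, or absolutely continuous, and $a$ integrable), so that the fundamental theorem of calculus applies and the argument goes through without change.
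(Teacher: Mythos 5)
Your proof is correct and follows exactly the same route as the paper's one-line proof: multiply by the integrating factor $e^{-\int_{t_0}^{t} a(r)\,dr}$, recognize the left side as a perfect derivative, integrate, and divide back. The only divergence is that your argument produces the kernel $e^{\int_{s}^{t} a(r)\,dr}$ inside the integral, which is the correct variation-of-parameters kernel; the kernel $e^{\int_{t_0}^{s} a(r)\,dr}$ written in the paper's statement appears to be a typo, so your version is the right one.
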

\begin{proof}
	Multiply (2.3) by the integrating factor $e^{\int_{t_0}^{s} a(r)\, dr}$ and integrate from $t_0$ to $t$.
\end{proof}

The next Lemmas will be used in  estimates:

\begin{lemma}[See: \cite{friedman}, Theorem 9.1]  Let  $n$ be a natural number from the interval $[2,7];  \;D_n$ be a sufficiently smooth bounded domain in $\R^n$ satisfying the cone condition and $v \in H^4(D_n)\cap H^1_0(D_n)$.  then
for all $n$ defined above,	
	\begin{equation}
	\sup_{D_n}|v(x)|\leq C_n\| v\|_{H^4(D_n)}.
	\end{equation}
The constant $C_n$ depends on $n, D_n.$	
\end{lemma}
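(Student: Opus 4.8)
The statement is the Sobolev embedding $H^4(D_n)\hookrightarrow L^\infty(D_n)$, and it holds precisely because $4>n/2$ for every $n$ in the range $[2,7]$; equivalently $2\cdot 4>n$. The plan is to reduce the estimate on the bounded domain $D_n$ to the analogous estimate on the whole space $\R^n$ by means of an extension operator, and then to settle the whole-space estimate by a Fourier-transform computation that isolates exactly where the dimensional restriction is needed.

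First I would invoke a bounded linear extension operator $E\colon H^4(D_n)\to H^4(\R^n)$, which is available because $D_n$ is sufficiently smooth and satisfies the cone condition (Stein's extension theorem; note that the general extension is used here rather than extension by zero, since $v\in H^1_0(D_n)$ controls only the trace of $v$ and not those of its normal derivatives). Thus $Ev=v$ on $D_n$ and $\|Ev\|_{H^4(\R^n)}\le C(n,D_n)\|v\|_{H^4(D_n)}$. It then suffices to prove the pointwise bound $\sup_{\R^n}|w|\le C_n\|w\|_{H^4(\R^n)}$ for $w:=Ev$ and to restrict the result back to $D_n$.

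For the whole-space bound I would pass to the Fourier transform $\hat w$. By the inversion formula $|w(x)|\le (2\pi)^{-n/2}\|\hat w\|_{L^1(\R^n)}$, so it is enough to control $\|\hat w\|_{L^1(\R^n)}$. Inserting the factor $1=(1+|\xi|^2)^{2}(1+|\xi|^2)^{-2}$ and applying the Cauchy--Schwarz inequality gives
\begin{equation}
\int_{\R^n}|\hat w(\xi)|\,d\xi\le\Big(\int_{\R^n}(1+|\xi|^2)^{4}|\hat w(\xi)|^2\,d\xi\Big)^{1/2}\Big(\int_{\R^n}(1+|\xi|^2)^{-4}\,d\xi\Big)^{1/2}.
\end{equation}
By Plancherel's identity the first factor is comparable to $\|w\|_{H^4(\R^n)}$, while the second factor is a fixed constant depending only on $n$, provided it is finite.

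The main obstacle --- and in fact the only place where the dimensional restriction enters --- is the finiteness of the weight integral $\int_{\R^n}(1+|\xi|^2)^{-4}\,d\xi$. Passing to polar coordinates, its tail behaves like $\int_1^\infty r^{n-9}\,dr$, which converges if and only if $n-9<-1$, that is $n<8$, that is $n\le 7$. This is exactly the hypothesis $n\in[2,7]$, and it explains why $H^4$ regularity is just enough to reach $L^\infty$ in these dimensions. Collecting the constants coming from the extension operator, Plancherel's identity, and the convergent weight integral yields the claimed bound with $C_n$ depending only on $n$ and $D_n$.
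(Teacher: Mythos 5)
The paper does not actually prove this lemma: it is quoted verbatim as a known result, with a citation to Friedman's book (Theorem 9.1 there), and no argument is reproduced. Your proposal supplies a genuine, self-contained proof, and it is correct: the extension step legitimately uses the assumed smoothness of $D_n$ (note that the cone condition alone would not suffice for Stein's extension operator, so your parenthetical appeal to smoothness is doing real work, as is your correct observation that extension by zero is unavailable because $v\in H^1_0$ controls only the zeroth-order trace), and the Fourier-side computation is the standard one, with the tail integral $\int_1^\infty r^{n-9}\,dr$ converging exactly when $n\le 7$. The two routes differ in what they emphasize: the classical proofs behind the citation (Friedman, and likewise Adams) establish the embedding for cone-condition domains directly via an integral-representation argument over the cones, which avoids any extension operator and therefore needs less boundary regularity; your argument trades that generality for transparency, reducing everything to a one-line Plancherel/Cauchy--Schwarz estimate on $\R^n$ and making visible the precise point where the paper's restriction $n\in[2,7]$ is forced --- something the paper's bare citation leaves entirely implicit. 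One cosmetic point: since the conclusion is a pointwise supremum bound, you should add the remark that $\hat w\in L^1(\R^n)$ makes $w$ (hence $v$) agree a.e.\ with a bounded continuous function, so that $\sup_{D_n}|v|$ is understood for that representative; this is standard but worth a sentence.
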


\begin{lemma}\label{lemma3}
	Let $f(t)$ be a continuous  positive function such that
	
	\begin{align} & f'(t) + (\alpha - k f^n(t)) f(t) \leq 0,\;t>0,\;n\in\N,\label{lemao1}\\
		& \alpha - k f^n(0)> 0,\;k>0.\label{lemao2}\end{align}
	\noi Then
	
	\begin{equation}f(t) < f(0)\end{equation}
	
	\noi for all $t > 0$.
\end{lemma}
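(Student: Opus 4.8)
The plan is to exploit the self-reinforcing nature of the damping coefficient $\alpha - k f^n(t)$ through a continuity (bootstrap) argument, rather than attempting a direct application of Gronwall's inequality, which would be circular here: the decay of $f$ requires the coefficient to stay positive, yet the positivity of the coefficient is precisely what the smallness of $f$ would guarantee. The starting observation is purely local. At $t=0$ the hypothesis \eqref{lemao2} together with positivity of $f$ gives $f'(0) \le -(\alpha - k f^n(0))\,f(0) < 0$, so $f$ is strictly decreasing in some right-neighbourhood of the origin; in particular $f(t) < f(0)$ there. This is the seed from which the global bound will be grown.

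Next I would set
$$T := \sup\{\tau > 0 : f(s) < f(0) \text{ for all } s \in (0,\tau]\},$$
which is well defined and strictly positive by the initial decrease just described. The whole assertion of the lemma amounts to proving $T = +\infty$, so I would argue by contradiction and assume $T < +\infty$. On the entire interval $(0,T)$ one has $f(t) < f(0)$, hence $f^n(t) < f^n(0)$, and therefore $\alpha - k f^n(t) > \alpha - k f^n(0) > 0$. Feeding this sign information back into \eqref{lemao1} and using $f(t) > 0$ yields $f'(t) \le -(\alpha - k f^n(t))\,f(t) < 0$ throughout $(0,T)$, so $f$ is in fact \emph{strictly} decreasing on $(0,T)$.

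The contradiction then comes from comparing the two ways of evaluating $f$ at the endpoint $T$. On one hand, strict monotonicity on $(0,T)$ together with continuity forces $f(T) < f(t_0) < f(0)$ for any fixed $t_0 \in (0,T)$, so $f(T) < f(0)$. On the other hand, maximality of $T$ combined with continuity forces $f(T) = f(0)$: were $f(T) < f(0)$, continuity would preserve the strict inequality slightly beyond $T$, contradicting the definition of the supremum. These two conclusions are incompatible, so the assumption $T < +\infty$ is untenable and $T = +\infty$, which is exactly the claim $f(t) < f(0)$ for all $t>0$.

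The step I expect to demand the most care is the passage to the boundary $t = T$. A naive limit along a strictly decreasing sequence only delivers the non-strict $f(T) \le f(0)$, whereas the argument needs the strict inequality $f(T) < f(0)$; this is obtained by first comparing with an interior point $t_1 \in (t_0,T)$, where strictness is genuine, and only then letting $t \to T^-$. Apart from this delicacy, the reasoning is routine: it never requires solving or even approximating the inequality, only tracking the sign of $\alpha - k f^n(t)$, which is what makes the bootstrap close.
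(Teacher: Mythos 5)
Your proof is correct and follows essentially the same route as the paper's: both establish an initial strict decrease, consider the first time $T$ at which $f$ could return to the value $f(0)$, and derive a contradiction from the positivity of $\alpha - k f^n(t)$ on the interval where $f(t)<f(0)$. The only cosmetic difference is that the paper extracts the contradiction by integrating \eqref{lemao1} over $[0,T]$ to get $f(T)\le f(0)-\int_0^T(\alpha-kf^n(t))f(t)\,dt<f(0)$, whereas you use the pointwise sign of $f'$ and strict monotonicity; your handling of the supremum $T$ and of the endpoint limit is, if anything, more careful than the paper's.
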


\proof{  Obviously, $f'(0) + (\alpha - k f^n(0)) f^n(0) \leq 0$. Since $f$ is continuous, there exists $T>0$ such that $f(t)<f(0)$ for every $t \in [0,T)$. Suppose that  $f(0) = f(T)$. Integrating \eqref{lemao1}, we find
	
	$$f(T)+ \int_0^T(\alpha - k f^n(t)) f(t) \, dt \leq f(0).$$
	
	\noi Since	$$  \int_0^T(\alpha - k f^n(t)) f(t) \, dt >0,$$ then $f(T)<f(0).$ This  contradicts that $f(T)=f(0).$ Therefore, $f(t) < f(0)$ for all $t > 0.$ \\
	The proof of Lemma 2.5 is complete.  $\Box$

\section{K-S equation  posed on smooth domains}

 Let $\Omega_n$ be the minimal nD-parallelepiped containing a given bounded smooth domain $\Bar{D_n}\in\R^n,\;n=1,...,7$:
$$\Omega_n =\{ x\in \R^n; x_i\in (0,L_i)\},\; u_i=(\phi)_{x_i},\;i=1,...,n.$$ Fixing any natural $n=2,...,7,$  consider  in $Q_n=D_n\times (0,t)$  the following initial-boundary value problem:
 
 	\begin{align}
 		(u_j)_t+\Delta^2 u_j+\Delta u_j +\frac{1}{2}\sum _{i=1}^n(u^2_i)_{x_j}=0,\; j=1,...,n,&\\
 		(u_i)_{x_j}=(u_j)_{x_i},\; j\ne i,\;\; i,j=1,...,n;&\\
   	u_j|_{\partial D_n}=\Delta u_j|_{\partial D_n}=0,\; t>0,&\\
 	u_j(x,0)=u_{j0}(x),\;j=1,...,n,\;\;x \in D_n.
\end{align}
 \begin{lemma}
 Let $f\in H^4(D_n)\cap H^1_0(D_n)$ and $ \Delta f|_{\partial D_n}=0.$ Then 
 $$
 a\|f\|^2 \leq	\|\nabla f\|^2,\;\;a^2\|f\|^2\leq \|\Delta f\|^2,\;\;a\|\nabla f\|^2\leq \|\Delta f\|^2,$$
 $$	a^2\|\Delta f\|^2\leq \|\Delta^2 f\|^2,\;\; \|\Delta \nabla f\|^2\leq \|\Delta^2 f\|\|\Delta f\|\leq \frac{1}{a}\|\Delta^2 f\|^2.$$

where $ a=\sum_{i=1}^n\frac{\pi^2 }{L^2_i} ,\;\;\|f\|^2=\int_{D_n}f^2(x)dx.$
\end{lemma}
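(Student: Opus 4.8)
The plan is to prove the five inequalities in a chain, using Steklov's inequality (Lemma 2.1) as the base case and integration by parts to climb up through the derivatives. First I would establish the bottom inequality $a\|f\|^2\leq\|\nabla f\|^2$. Since $f\in H^1_0(D_n)$, its extension by zero $\tilde f$ belongs to $H^1_0(\Omega_n)$ and $\nabla\tilde f$ is the zero-extension of $\nabla f$. By the absolute-continuity-on-lines characterization of Sobolev functions together with Fubini's theorem, for almost every line parallel to the $x_i$-axis the restriction of $\tilde f$ lies in $H^1_0(0,L_i)$, so Steklov's inequality gives $\frac{\pi^2}{L_i^2}\int_0^{L_i}|\tilde f|^2\,dx_i\leq\int_0^{L_i}|\tilde f_{x_i}|^2\,dx_i$. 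Integrating over the remaining variables yields $\frac{\pi^2}{L_i^2}\|f\|^2\leq\|f_{x_i}\|^2$, and summing over $i=1,\dots,n$ produces $a\|f\|^2\leq\sum_{i=1}^n\|f_{x_i}\|^2=\|\nabla f\|^2$.

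Next I would combine this bound with a single integration by parts. Because $f|_{\partial D_n}=0$, Green's identity gives $\|\nabla f\|^2=-\int_{D_n}f\,\Delta f\,dx\leq\|f\|\,\|\Delta f\|$. Inserting $\|f\|\leq a^{-1/2}\|\nabla f\|$ from the first step and cancelling one factor of $\|\nabla f\|$ leads to $a\|\nabla f\|^2\leq\|\Delta f\|^2$, which is the third inequality; the second inequality $a^2\|f\|^2\leq\|\Delta f\|^2$ then follows at once by chaining the first and third.

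For the two top inequalities I would exploit the hypothesis $\Delta f|_{\partial D_n}=0$, which together with $f\in H^4(D_n)$ places $g:=\Delta f$ in $H^2(D_n)\cap H^1_0(D_n)$. Applying the already-proved inequality $a^2\|g\|^2\leq\|\Delta g\|^2$ to $g$ gives $a^2\|\Delta f\|^2\leq\|\Delta^2 f\|^2$, the fourth estimate. Finally, since $\Delta$ and $\nabla$ commute one has $\Delta\nabla f=\nabla\Delta f=\nabla g$, and integrating by parts with $g|_{\partial D_n}=0$ yields $\|\Delta\nabla f\|^2=\|\nabla g\|^2=-\int_{D_n}g\,\Delta g\,dx\leq\|\Delta f\|\,\|\Delta^2 f\|$; rewriting the fourth estimate as $\|\Delta f\|\leq a^{-1}\|\Delta^2 f\|$ then gives the remaining bound $\|\Delta f\|\,\|\Delta^2 f\|\leq a^{-1}\|\Delta^2 f\|^2$.

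The computations themselves are routine; the one point demanding care is the very first step, namely justifying that the zero-extension of $f$ has its line restrictions in $H^1_0(0,L_i)$ so that Steklov's inequality may legitimately be applied coordinate by coordinate. This rests on the fact that $\Omega_n$ is the minimal parallelepiped containing $\bar D_n$ and that $f$ (respectively $\Delta f$) has vanishing trace on $\partial D_n$, so that the extended function genuinely vanishes at the endpoints of almost every such line. Once this is secured, every other inequality reduces to Cauchy--Schwarz, a Green identity whose boundary term is annihilated by the boundary conditions, or a direct concatenation of the earlier estimates.
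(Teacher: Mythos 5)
Your proof is correct and follows essentially the same route as the paper: zero-extension of $f$ to $\Omega_n$ plus coordinate-wise Steklov's inequality for the base estimate $a\|f\|^2\leq\|\nabla f\|^2$, then Green's identities combined with Cauchy--Schwarz to climb the chain of inequalities. The only cosmetic difference is that you obtain $a^2\|\Delta f\|^2\leq\|\Delta^2 f\|^2$ by applying the already-proved lower-order inequality to $g=\Delta f\in H^2(D_n)\cap H^1_0(D_n)$, whereas the paper integrates by parts twice directly, writing $\|\Delta f\|^2=\int_{D_n} f\,\Delta^2 f\,dx\leq\|\Delta^2 f\|\,\|f\|$; both arguments rest on exactly the same boundary conditions.
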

\begin{proof} We have
	$$\|\nabla f\|^2=\sum_{i=1}^n\|f_{x_i}\|^2.$$
	Define 
\begin{equation}\tilde{f}(x,t)= \begin{cases}f(x,t) \text{ if } x \in D_n;\\
		0 \text{ if } x \in \Omega_n/D_n.\end{cases} \end{equation}
	Making use of Steklov`s inequalities for $\tilde{f}(x,t)$ and taking into account that $\|\nabla f\|=\|\nabla \tilde{f}\|,$\;we get
	\begin{align*}
		\|\nabla f\|^2\geq a\|f\|^2,\;
	\text{where} \; a=\sum_{i=1}^n\frac{\pi^2 }{L_i^2} .
	\end{align*}	
   	On the other hand,
	$$a\|f\|^2\leq \|\nabla f\|^2 =-\int_{D_n}f\Delta fdx\leq \|\Delta f\|\|f\|.$$
	This implies
	$$a\|f\|\leq \|\Delta f\|\;\;\text{and} \;\;a^2\|f\|^2\leq \|\Delta f\|^2.$$
	Consequently, \;$a\|\nabla f\|^2\leq \|\Delta f\|^2.$ 	Similarly,
	$$ \|\Delta f\|^2=\int_{D_n} f\Delta^2 fdx\leq \|\Delta^2 f\|\|f\|\leq \frac{1}{a}\|\Delta^2 f\|\|\Delta f\|.$$
	Hence, \;\;$a\|\Delta f\|\leq \|\Delta^2 f\|.$ Moreover, $$ \|\Delta \nabla f\|^2=-\int_{D_n} \Delta^2 f \Delta fdx\leq \|\Delta^2 f\|\|\Delta f\|\leq \frac{1}{a}\|\Delta^2 f\|^2.$$
	Proof of Lemma 3.1 is complete.
	\end{proof}
\begin{rem} Assertions of Lemma 3.1 are true if the function $f$ is replaced respectively by $u_j,\;j=1,...,n$.
	\end{rem}

\begin{lemma} In conditiions of Lemma 3.1,
	\begin{align}\|f\|^2(t)_{H^2(D_n)}\leq 3\|\Delta f\|^2(t), &\\
\|f\|^2(t)_{H^4(D_n)}\leq 5\|\Delta^2f\|^2(t), &\\
\sup_{D_n}|f(x)|\leq C_s\| \Delta^2 f\|,\; \text{where}\;C_s=5C_n.
\end{align}
\end{lemma}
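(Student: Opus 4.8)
The plan is to split each Sobolev norm according to the order of the derivatives it contains and to dominate every order by the top-order quantity — $\|\Delta f\|$ for the $H^2$ estimate and $\|\Delta^2 f\|$ for the $H^4$ estimate — using the chain of Steklov-type inequalities already collected in Lemma 3.1. The structural point that produces the clean constants is the admissibility condition (1.4), which forces $a>1$; consequently every factor $a^{-k}$ coming out of a Steklov estimate is at most $1$. The constants $3$ and $5$ are then simply the number of distinct derivative orders present in $H^2$ (orders $0,1,2$) and in $H^4$ (orders $0,1,2,3,4$).

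For the first inequality I would write $\|f\|_{H^2}^2=\|f\|^2+\|\nabla f\|^2+\sum_{i,j}\|f_{x_ix_j}\|^2$. Lemma 3.1 gives $\|f\|^2\le a^{-2}\|\Delta f\|^2$ and $\|\nabla f\|^2\le a^{-1}\|\Delta f\|^2$, both bounded by $\|\Delta f\|^2$ because $a>1$. For the second-order block I would use the Hessian estimate $\sum_{i,j}\|f_{x_ix_j}\|^2\le\|\Delta f\|^2$, obtained by transferring derivatives in $\int_{D_n}f_{x_ix_i}f_{x_jx_j}\,dx$ and handling the boundary contributions by means of $f|_{\partial D_n}=0$ and $\Delta f|_{\partial D_n}=0$. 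Summing the three blocks yields $\|f\|_{H^2}^2\le 3\|\Delta f\|^2$.

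The $H^4$ bound follows the same pattern with two more rungs. I would estimate orders $0$ and $1$ by $a^{-4}\|\Delta^2 f\|^2$ and $a^{-3}\|\Delta^2 f\|^2$, the order-$2$ block by $\|\Delta f\|^2\le a^{-2}\|\Delta^2 f\|^2$, the order-$3$ block by reducing it to $\|\nabla\Delta f\|^2=\|\Delta\nabla f\|^2\le a^{-1}\|\Delta^2 f\|^2$ (the last inequality being precisely the estimate of Lemma 3.1), and the order-$4$ block by $\sum_{|\alpha|=4}\|D^\alpha f\|^2\le\|\Delta^2 f\|^2$. Since $a>1$, each of the five blocks is at most $\|\Delta^2 f\|^2$, so $\|f\|_{H^4}^2\le 5\|\Delta^2 f\|^2$. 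The sup-norm bound is then immediate from the embedding (2.4): $\sup_{D_n}|f|\le C_n\|f\|_{H^4}\le\sqrt5\,C_n\|\Delta^2 f\|\le 5C_n\|\Delta^2 f\|$, which is the claim with the (non-sharp but valid) constant $C_s=5C_n$.

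The one genuinely non-routine step is the justification of the top-order reductions $\sum_{i,j}\|f_{x_ix_j}\|^2\le\|\Delta f\|^2$, $\sum_{i,j,k}\|f_{x_ix_jx_k}\|^2\le\|\nabla\Delta f\|^2$ and $\sum_{|\alpha|=4}\|D^\alpha f\|^2\le\|\Delta^2 f\|^2$. The relevant integrations by parts leave boundary integrals that are \emph{not} annihilated by $f=\Delta f=0$ alone: for the Hessian one finds, after using $f|_{\partial D_n}=\Delta f|_{\partial D_n}=0$,
\[
\|\Delta f\|^2-\sum_{i,j}\|f_{x_ix_j}\|^2=\int_{\partial D_n}\mathcal{H}\,\Big(\frac{\partial f}{\partial\nu}\Big)^2\,dS,
\]
where $\mathcal{H}$ denotes the mean curvature of $\partial D_n$. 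Hence the required inequality holds once $\partial D_n$ has non-negative mean curvature, i.e. is convex, and the higher-order blocks are controlled by analogous, higher-curvature boundary integrals. Keeping track of these curvature terms, and invoking the geometric hypotheses on the admissible thin domains, is where the smoothness assumptions on $D_n$ genuinely enter the argument.
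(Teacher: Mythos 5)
Your proof diverges from the paper's at exactly the point you flag as ``genuinely non-routine,'' and that is where it fails as a proof of the lemma \emph{as stated}. The paper's own argument never sees a mixed derivative: it reads the $H^4$-norm as the five-term sum
\[
\|f\|^2_{H^4(D_n)}=\|f\|^2+\|\nabla f\|^2+\|\Delta f\|^2+\|\nabla\Delta f\|^2+\|\Delta^2 f\|^2,
\]
applies Lemma 3.1 termwise to get the factor $\bigl(a^{-4}+a^{-3}+a^{-2}+a^{-1}+1\bigr)$, uses $a>1$ to bound it by $5$, proves (3.6) ``similarly'' (three terms, hence the $3$), and gets (3.8) by chaining with Lemma 2.3. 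You instead insist on the standard multi-index norm of Section 2, and therefore must dominate the blocks $\sum_{|\alpha|=2}\|D^\alpha f\|^2$, $\sum_{|\alpha|=3}\|D^\alpha f\|^2$, $\sum_{|\alpha|=4}\|D^\alpha f\|^2$ by Laplacian quantities. As you yourself observe, the integration by parts that does this leaves boundary integrals weighted by the curvature of $\partial D_n$, and these have a favorable sign only when $D_n$ is convex. Convexity is not among the hypotheses of the lemma (the paper assumes only a smooth bounded domain satisfying the cone condition), so your conclusion is conditional on a hypothesis you were not given: that is a genuine gap. In addition, your order-$3$ and order-$4$ reductions are only asserted to be ``analogous''; the order-$4$ block contains derivatives such as $f_{x_1x_1x_2x_2}$ that are not reached by a single integration by parts against $\|\Delta^2 f\|^2$ --- one must iterate the Hessian argument on $g=\Delta f$ (admissible, since $\Delta f\in H^2(D_n)\cap H^1_0(D_n)$ by (3.3)) and invoke $H^2$-regularity of the Dirichlet Laplacian, whose constant is domain-dependent in the non-convex case.

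That said, your scruple is substantive and worth recording: with the norm actually defined in Section 2 --- which is also the norm in which the embedding Lemma 2.3 is stated --- the clean constants $3$ and $5$ are exactly as delicate as you say, and the paper's termwise computation is valid only under the Laplacian-based reading of $\|\cdot\|_{H^2}$ and $\|\cdot\|_{H^4}$. So what you have actually proved is: the lemma holds as stated for convex domains, and for general smooth domains it holds either in the Laplacian-based norms (the paper's implicit convention) or with domain-dependent constants supplied by elliptic regularity. To close the gap, either add convexity to the hypotheses, or switch to the paper's norm convention and say so explicitly; as submitted, the proof does not establish the lemma under its stated hypotheses.
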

\begin{proof} To prove (3.7), making use of Lemma 3.1, we find
$$\|f\|^2_{H^4(D_n)}=\|f\|^2+\|\nabla f\|^2+\|\Delta f\|^2+\|\nabla\Delta f\|^2+\|\Delta^2 f\|^2$$$$
\leq\Big(\frac{1}{a^4}+	\frac{1}{a^3}+\frac{1}{a^2}+\frac{1}{a}+1\Big)\|\Delta^2 f\|^2.$$
Since $a>1$, then  (3.7) follows. Similarly, (3.6) can be proved. Moreover, taking into account Lemma 2.3, we get (3.8).
\end{proof}
\begin{thm}	[Special basis]\label{existencethm}
Let  $n$ be a natural number from the interval [2,7]; $D_n\in\R^n$ be a bounded smooth domain satisfying the Cone condition and $\Omega_n$ be a minimal $nD$-parallelepiped containing $\Bar{D_n}.$ Let
\begin{equation}
\theta=	1-\frac{1}{a}=1-\frac{1}{\sum_{i=1}^n\frac{\pi^2 }{L^2_i} }>0.
\end{equation}
Given $$u_{j0}(D_n)\in H^2(D_n)\cap H^1_0(D_n),\;j=1,...,n$$ such that

\begin{align}
\theta-\frac{2 C_s^2 7^3}{a\theta}\Big(\sum_{j=1}^n\|\Delta u_j\|^2(0)\Big)>0,
\end{align}
then there exists a unique global regular solution to (3.1)- (3.4):
$$ u_j\in L^{\infty}(\R^+; H^2(D_n))\cap L^2(\R^+;H^4(D_n)\cap H^1_0(D_n));$$$$u_{jt}\in L^2(\R^+;L^2(D_n)), \;j=1,...,n.$$ Moreover,
\begin{align}\sum_{j=1}^n \|\Delta u_j\|^2(t)
		\leq \Big(\sum_{j=1}^n\|\Delta u_{j0}\|^2\Big)\exp\{-a^2t\theta/2\}
	\end{align}
and
\begin{align}
	\sum_{i=1}^n\|\Delta u_i\|^2(t)+\int_0^t\sum_{i=1}^n\|\Delta^2 u_i\|^2(\tau)d\tau \leq C\sum_{i=1}^n\|\Delta u_{i0}\|^2, \;t>0.\notag
\end{align}

\end{thm}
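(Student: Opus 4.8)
The plan is to construct solutions by the Faedo--Galerkin method with the \emph{special basis} $\{w_k\}$ of Dirichlet eigenfunctions of the Laplacian on $D_n$, $-\Delta w_k=\lambda_k w_k$, $w_k|_{\partial D_n}=0$. Because $\Delta w_k|_{\partial D_n}=-\lambda_k w_k|_{\partial D_n}=0$, every finite combination automatically fulfils both boundary conditions (3.3), so I would look for approximations $u_j^N=\sum_{k=1}^N c^N_{jk}(t)\,w_k$ solving the projection of (3.1)--(3.2). This is an ODE system with a locally Lipschitz right-hand side, hence locally solvable; the whole problem then reduces to $N$-uniform a priori bounds that also force these local solutions to be global.

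The central estimate comes from multiplying the $j$-th equation by $\Delta^2 u_j$, integrating over $D_n$ and summing in $j$. Using $u_j|_{\partial D_n}=\Delta u_j|_{\partial D_n}=0$ and integrating by parts, the time term becomes $\tfrac12\frac{d}{dt}\sum_j\|\Delta u_j\|^2$, the biharmonic term gives $\sum_j\|\Delta^2u_j\|^2$, and the destabilising term contributes $-\sum_j\|\nabla\Delta u_j\|^2$. Here Lemma 3.1 is decisive: since $\|\nabla\Delta u_j\|^2\le \tfrac1a\|\Delta^2u_j\|^2$, the second and third terms together dominate $\theta\sum_j\|\Delta^2u_j\|^2$ with $\theta=1-\tfrac1a>0$ from (3.9). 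For the nonlinear part I would write $\tfrac12\sum_{i,j}\int(u_i^2)_{x_j}\Delta^2u_j\,dx=\sum_{i,j}\int u_i\,(u_i)_{x_j}\,\Delta^2u_j\,dx$, bound $u_i$ in $L^\infty$ by $C_s\|\Delta^2u_i\|$ through (3.8), the first derivative by $\|\nabla u_i\|\le a^{-1/2}\|\Delta u_i\|$ through Lemma 3.1, and then close with the Cauchy--Schwarz and Young inequalities (the crude count over the at most $n\le 7$ indices producing the factor $7^3$) to obtain
\[
\Big|\tfrac12\sum_{i,j=1}^n\int_{D_n}(u_i^2)_{x_j}\,\Delta^2u_j\,dx\Big|\le \frac{2C_s^2\,7^3}{a\theta}\Big(\sum_{i=1}^n\|\Delta u_i\|^2\Big)\sum_{j=1}^n\|\Delta^2u_j\|^2 .
\]
Substituting this gives the differential inequality
\[
\frac12\frac{d}{dt}\sum_{j=1}^n\|\Delta u_j\|^2+\Big(\theta-\frac{2C_s^2\,7^3}{a\theta}\sum_{i=1}^n\|\Delta u_i\|^2\Big)\sum_{j=1}^n\|\Delta^2u_j\|^2\le0,
\]
whose bracket is positive at $t=0$ precisely by the smallness hypothesis (3.10).

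The delicate point is that the bracket stays nonnegative only while $\sum_i\|\Delta u_i\|^2$ remains small, so the smallness of the data must be propagated in time. This is exactly the role of Lemma 2.5: setting $f(t)=\sum_i\|\Delta u_i\|^2(t)$ and using $\|\Delta^2u_i\|^2\ge a^2\|\Delta u_i\|^2$ from Lemma 3.1, the inequality above takes the form $f'+(\alpha-kf)f\le0$, so Lemma 2.5 yields $f(t)<f(0)$ for all $t>0$ under (3.10). Since $f$ is then nonincreasing, the bracket stays bounded below by its positive value at $t=0$; combined with $\|\Delta^2u_j\|^2\ge a^2\|\Delta u_j\|^2$ and Gronwall's inequality (Lemma 2.2) this produces the exponential decay (3.11). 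Integrating the same inequality in $t$ gives the stated $L^2(\R^+;H^4)$ bound, and reading $u_{jt}=-\Delta^2u_j-\Delta u_j-\tfrac12\sum_i(u_i^2)_{x_j}$ off the equation, together with (3.8) for the nonlinear term, yields $u_{jt}\in L^2(\R^+;L^2(D_n))$.

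These $N$-uniform bounds let me pass to the limit: weak-$*$ limits in $L^\infty(\R^+;H^2)$ and weak limits in $L^2(\R^+;H^4)$, plus an Aubin--Lions compactness argument for the quadratic nonlinearity, identify a global solution in the asserted class, while the boundary and compatibility conditions (3.2)--(3.3) survive in the limit since they hold for the basis. Uniqueness I would obtain by subtracting two solutions $w_j=u_j-v_j$, writing $u_i^2-v_i^2=(u_i+v_i)w_i$, testing the difference equations with $w_j$, controlling $u_i+v_i$ in $L^\infty$ via the established $H^4$-regularity and (3.8), and closing with Gronwall and $w_j(0)=0$. The main obstacle throughout is the nonlinear estimate combined with this continuation step: the quadratic term carries two factors of the top-order norm $\|\Delta^2u_j\|$, so it can be absorbed into the dissipation only because the low-order factor $\|\Delta u_i\|$ is kept small for all time by Lemma 2.5 --- it is the interplay of (3.10), the $L^\infty$ bound (3.8) and the decay that makes the scheme close.
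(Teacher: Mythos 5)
Your proposal is correct and takes essentially the same route as the paper: multiplying by $\Delta^2 u_j$, controlling the nonlinearity through the $L^\infty$ bound (3.8) and Young's inequality to produce the bracket $\theta-\frac{2C_s^2 n^3}{a\theta}\sum_j\|\Delta u_j\|^2$, propagating its positivity in time via Lemma 2.4 (the continuation lemma, called Lemma 2.5 in the paper's text), deducing the exponential decay from $\|\Delta^2 u_j\|^2\geq a^2\|\Delta u_j\|^2$, and proving uniqueness by subtracting solutions and applying Gronwall with the $L^1(\R^+)$ integrability of $\|\Delta^2 u_i\|^2$. The only cosmetic differences are that your Dirichlet--Laplacian eigenfunctions are in fact the same special basis the paper uses (the Navier biharmonic eigenproblem $\Delta^2 w=\lambda w$, $w|_{\partial D_n}=\Delta w|_{\partial D_n}=0$ is solved exactly by those eigenfunctions with $\lambda=\lambda_k^2$), and that you treat the destabilizing term by integration by parts, $(\Delta u_j,\Delta^2 u_j)=-\|\nabla\Delta u_j\|^2\geq-\tfrac{1}{a}\|\Delta^2 u_j\|^2$, where the paper uses Cauchy--Schwarz to the same effect.
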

\begin{rem} In Theorem 3.1, there are two types of restrictions: the first one is pure geometrical,
	$$	1-\frac{1}{a}>0$$
	which is needed to eliminate  destabilizing effects of the terms $\Delta u_j$ in (3.1):
	
$$ \|\Delta u_j\|^2-\|\nabla u_j\|^2.$$	
	
It is clear that$$\lim_{L_i\to 0}a=\sum_{i=1}^n\frac{\pi^2 }{L^2_i}=+\infty,$$  hence to achieve  (3.9), it is possible  to  decrease $L_i,\;i=1,...,n$ allowing other $L_j,\;j\ne i$ to grow. \\
A situation with  condition	(3.10)
is more complicated: if initial data are not small, then it is possible either to decrease $L_i,\;i=1,...,n,$ to fulfill this condition or for fixed $L_i,\;i=1,...,n$ to decrease initial data $\|u_{j0}\|.$\\
\end{rem}	
	
\begin{proof} It is possible to construct Galerkin`s approximations to (3.1)-(3.4) by the following way:
let $w_j(x)$ be eigenfunctions of the problem:
$$\Delta^2 w_j-\lambda_jw_j=0\;\text{in}\; D_n;\;\;w_j|_{\partial D_n}=\Delta w_j|_{\partial D_n}=0, j=1,2,....$$ 
Define
\begin{align*}
u^N_j(x,t)=\sum_{k=1}^N g_k^j(t)w_j(x).
\end{align*}	
Unknown functions $g_i^j(t)$\;satisfy the following initial problems:
\begin{align*}
 (\frac{d}{dt}u^N_j,w_j)(t)+(\Delta^2 u^N_j,w_j)(t)+(\Delta u^N_j,w_j)(t)&\\
 +\frac{1}{2}(\sum_{i=1}^n(u^N_i)^2_{x_j},w_j)(t)=0,\\
g_k^j(0)=g_{0k}^j,\;\;j=1,...,n,\;\;k=1,2,... .	
\end{align*}	
	The estimates that follow may be established on Galerkin's approximations (see: \cite{Guo,cousin}), but it is more explicitly to prove them on smooth solutions of (3.1)-(3.4).

\noi {\bf Estimate I:} $u \in L^\infty(\R^+; H^2(D_n)\cap H^1_0(D_n))\cap L^2(\R^+;H^4(D_n)\cap H^1_0(D_n))$. 

For any natural $n\in  [2,7],$ multiply (3.1) by $2\Delta^2 u_j$ to obtain
\begin{align}
\frac{d}{dt} \|\Delta u_j\|^2(t)+2\|\Delta^2 u_j\|^2(t)+2\|\Delta^2 u\|(t)\|\Delta u_j\|(t)&\notag\\
+2\sum_{i=1}^n (u_i(u_i)_{x_j},\Delta^2 u_j)(t)=0.
\end{align}

Making use of (3.7) and Lemmas 2.3, 3.1, 3.2, we can write 

\begin{align}
	\frac{d}{dt} \|\Delta u_j\|^2(t)+2\theta\|\Delta^2 u_j\|^2(t)&\notag\\
	\leq 2\Big[\sum_{i=1}^n \sup_{D_n}| u_i(x,t)|\|\nabla u_i\|(t)\Big]\|\Delta^2 u_j\|(t)&\notag\\
	\leq
2\Big[C_s\sum_{i=1}^n\|\Delta^2 u_i\|(t)\|\nabla u_i\|(t)\Big]\|\Delta^2 u_j\|(t); j=1,...,n.
\end{align}
Summing over $j=1,...,n$ and making use of Lemma 3.1, we  rewrite (3.12) in the form:

\begin{align*}
	\frac {d}{dt}\sum_{j=1}^n\|\Delta u_j\|^2(t)+2\theta \sum_{j=1}^n \|\Delta^2 u_j\|(t)&\notag\\
\leq 2C_s n\Big(\sum_{j=1}^n\|\nabla  u_j\|(t)\Big)\Big[\sum_{j=1}^n \|\Delta^2 u_j\|^2(t)\Big]&\notag\\
\leq \Big[\frac{\theta}{2}+\frac{2C_s^2 n^2}{\theta}\Big(\sum_{j=1}^n\|\nabla u_j\|(t)\Big)^2\Big]\sum_{j=1}^n\|\Delta^2 u_j\|^2(t)&\notag\\
\leq \Big[\frac{\theta}{2}+\frac{2C_s^2 n^3}{\theta}\Big(\sum_{j=1}^n\|\nabla u_j\|^2(t)\Big)\Big]\sum_{j=1}^n\|\Delta^2 u_j\|^2(t)&\notag\\
\leq \Big[\frac{\theta}{2}+\frac{2C_s^2 n^3}{a\theta}\Big(\sum_{j=1}^n\|\Delta u_j\|^2(t)\Big)\Big]\sum_{j=1}^n\|\Delta^2 u_j\|^2(t).
\end{align*}
Taking this into account, we transform (3.12) in the form

\begin{align}
	\frac {d}{dt}\sum_{j=1}^n\|\Delta u_j\|^2(t)+\frac{\theta}{2} \sum_{j=1}^n \|\Delta^2 u_j\|^2(t)&\notag\\
	+ \Big[\theta-\frac{2C_s^2 n^3}{a\theta}\Big(\sum_{j=1}^n\|\Delta u_j\|^2(t)\Big)\Big]\sum_{j=1}^n\|\Delta^2 u_j\|^2(t)\leq 0.
\end{align}

 Condition (3.10) and Lemma 2.4 guarantee that
 
 $$\theta-\frac{2C_s^2 n^3}{a\theta}\Big(\sum_{j=1}^n\|\Delta u_j\|^2(t)\Big)>0,\; \;t>0.$$
 
 Hence (3.13) can be rewritten as
 \begin{align}
 	\frac {d}{dt}\sum_{j=1}^n\|\Delta u_j\|^2(t)+\frac{a^2\theta}{2}\sum_{j=1}^n \|\Delta u_j\|^2(t)\leq 0.
 \end{align}
 Integrating, we get
 
 \begin{align}\sum_{i=1}^n \|\Delta u_j\|^2(t)	\leq\sum_{j=1}^n\|\Delta u_{j0}\|^2\exp\{-a^2\theta t/2\}.
 \end{align}
 
Returning to (3.22), we find
\begin{align}
\sum_{i=1}^n\|\Delta u_i\|^2(t)+\int_0^t\sum_{i=1}^n\|\Delta^2 u_i\|^2(\tau)d\tau \leq C\sum_{i=1}^n\|\Delta u_{i0}\|^2.
\end{align}

Finally, directly from (3.1), we obtain $$(u_j)_t \in L^2(\R^+;L^2(D_n)),\;j=1,...,n.$$

This completes the proof of the existence part of Theorem 3.1.

\begin{lemma} A regular solution of Theorem 3.1 is uniquely defined.
\end{lemma}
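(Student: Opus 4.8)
The plan is to argue by an $L^2$ energy estimate on the difference of two solutions. Suppose $(u_1,\dots,u_n)$ and $(v_1,\dots,v_n)$ are both regular solutions of (3.1)--(3.4) with the same data, and set $w_j=u_j-v_j$. Subtracting the two systems and using the factorization $(u_i^2)_{x_j}-(v_i^2)_{x_j}=\big(w_i(u_i+v_i)\big)_{x_j}$, each $w_j$ satisfies
\begin{align*}
(w_j)_t+\Delta^2 w_j+\Delta w_j+\frac{1}{2}\sum_{i=1}^n\big(w_i(u_i+v_i)\big)_{x_j}=0,
\end{align*}
with $w_j|_{\partial D_n}=\Delta w_j|_{\partial D_n}=0$ and $w_j(x,0)=0$. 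Since both solutions lie in $L^2(\R^+;H^4(D_n)\cap H^1_0(D_n))$ by Estimate I, so does each $w_j$, and therefore the inequalities of Lemma 3.1 and the embedding of Lemma 2.3 are available for $w_j$ for a.e. $t$.

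The main computation is the low-order estimate. Multiplying the difference equation by $2w_j$, integrating over $D_n$, and integrating by parts (all boundary terms vanish because $w_j=\Delta w_j=0$ on $\partial D_n$) gives
\begin{align*}
\frac{d}{dt}\|w_j\|^2(t)+2\|\Delta w_j\|^2(t)-2\|\nabla w_j\|^2(t)=\sum_{i=1}^n\big(w_i(u_i+v_i),(w_j)_{x_j}\big)(t).
\end{align*}
Using $a\|\nabla w_j\|^2\le\|\Delta w_j\|^2$ from Lemma 3.1, the two linear terms on the left combine into the dissipative quantity $2\theta\|\Delta w_j\|^2$, with $\theta=1-1/a>0$ as in (3.9); this is exactly where the geometric condition is used to dominate the destabilizing term $\Delta w_j$. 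For the nonlinear term I would place the $L^\infty$ norm on the top-order factor $u_i+v_i$, keep $w_i$ in $L^2$, and bound $\|\nabla w_j\|\le a^{-1/2}\|\Delta w_j\|$, so that
\begin{align*}
\Big|\sum_{i=1}^n\big(w_i(u_i+v_i),(w_j)_{x_j}\big)\Big|\le \frac{C_s}{\sqrt a}\sum_{i=1}^n\big(\|\Delta^2 u_i\|+\|\Delta^2 v_i\|\big)\,\|w_i\|\,\|\Delta w_j\|,
\end{align*}
where $\sup_{D_n}|u_i+v_i|\le C_s(\|\Delta^2 u_i\|+\|\Delta^2 v_i\|)$ follows from (3.8). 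A Young inequality then absorbs a fraction $\theta\sum_j\|\Delta w_j\|^2$ into the dissipation, leaving a remainder of the form $C(t)\sum_i\|w_i\|^2$ with $C(t)=\frac{C}{\theta}\sum_i\big(\|\Delta^2 u_i\|^2+\|\Delta^2 v_i\|^2\big)$.

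Summing over $j$ and discarding the nonnegative dissipative term yields
\begin{align*}
\frac{d}{dt}\sum_{j=1}^n\|w_j\|^2(t)\le C(t)\sum_{j=1}^n\|w_j\|^2(t).
\end{align*}
The decisive point is that $C(t)$ is integrable on $\R^+$, because $u_i,v_i\in L^2(\R^+;H^4(D_n))$ by Estimate I. Since $\sum_j\|w_j\|^2(0)=0$, the Gronwall inequality (Lemma 2.2) forces $\sum_j\|w_j\|^2(t)\equiv 0$, hence $u_j\equiv v_j$ and the solution is unique.

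I expect the only real obstacle to be the allocation of norms in the nonlinear term. One must measure the top-order factor $u_i+v_i$ in $L^\infty$ (hence through $H^4$ via (3.8), which is only $L^2$ in time) while letting the lowest-order factor $w_i$ carry the $L^2$ norm on which Gronwall operates and $\nabla w_j$ be absorbed by $\theta\|\Delta w_j\|^2$; any other distribution either destroys the time-integrability of the coefficient $C(t)$ or produces a term that cannot be controlled by the available dissipation. It is worth noting that the $H^4\hookrightarrow L^\infty$ step is precisely what restricts the argument to $n\le 7$.
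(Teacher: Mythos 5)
Your proof is correct and follows essentially the same route as the paper: the same difference system with the factorization $(u_i^2-v_i^2)_{x_j}=\big(w_i(u_i+v_i)\big)_{x_j}$, the multiplier $2w_j$, absorption of $2\|\nabla w_j\|^2$ into the dissipation via Lemma 3.1 and the condition $\theta>0$, the $L^\infty$ bound on $u_i+v_i$ through the $H^4$ embedding (3.8) with $w_i$ kept in $L^2$, and Gronwall's inequality (Lemma 2.2) using the $L^1(\R^+)$-integrability of $\|\Delta^2 u_i\|^2,\|\Delta^2 v_i\|^2$ from Estimate I. Your closing remark about why the norm allocation in the nonlinear term is forced matches the paper's estimate of the term $I$ exactly, so there is nothing to add.
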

\begin{proof} Let $u_j$ and $v_j$,\;$j=1,...,n$ be two distinct solutions to (3.1)-(3.4). Denoting $w_j=u_j-v_j$,	we come to the following system:
\begin{align}
(w_j)_t+\Delta^2 w_j+\Delta w_j +\frac{1}{2}\sum_{i=1}^n\Big(u^2_i-v^2_i\Big)_{x_j} =0,&\\	
(w_j)_{x_i}=(w_i)_{x_j},\;i\ne j,&\\
w_j|_{\partial D_n}=\Delta w_j|_{\partial D_n}
=0,\; t>0;&\\
w_j(x,0)=0 \;\text{in} \;D,\;\;j=1,...,n.
\end{align}
Multiply (3.18) by $2w_j$ to obtain

\begin{align} \frac{d}{dt}\|w_j\|^2(t)+2\|\Delta w_j\|^2(t)-2\|\nabla w_j\|^2(t)&\notag\\
	-\sum_{i=1}^n\Big(\{u_i+v_i\}w_i,(w_j)_{x_j}\Big)(t)=0,\;j=1,...,n.
	\end{align}
Making use of Lemmas 2.3 and 3.1, 3.2, we estimate
$$
	 I=\sum_{i=1}^n(\{u_i+v_i\}w_i,(w_j)_{x_j})\leq\frac{\epsilon}{2}\|\nabla w_j\|^2+\frac{1}{2\epsilon}\Big(\sum_{i=1}^n\|\{u_i+v_i\}w_i\|\Big)^2$$
$$\leq \frac{\epsilon}{2a}\|\Delta w_j\|^2	+\frac{2}{\epsilon}\sum_{i=1}^n \sup_{D_n}\{u^2_i(x,t)_i+v^2_i(x,t)\}\|w^2_i\|(t)$$
$$\leq \frac{\epsilon}{2a}\|\Delta w_j\|^2	+\frac{4C_s^2}{\epsilon}\Big[\sum_{i=1}^n\{\|\Delta^2 u_i\|^2+\|\Delta^2 v_i\|^2\}\Big]\sum_{j=1}^n\|w_j\|^2.$$ Here $\epsilon$ is an arbitrary positive number. Substituting $I$ into (3.22), we get

\begin{align}
\frac{d}{dt}\|w_j\|^2(t)+(2-\frac{2}{a}-\frac{\epsilon}{2a})\|\Delta w_j\|^2(t)&\notag\\
\leq \frac{4C_s^2}{\epsilon}\Big[\sum_{i=1}^n\{\|\Delta^2 u_i\|^2+\|\Delta^2 v_i\|^2\}\Big]\sum_{j=1}^n\|w_j\|^2.
\end{align}
Taking $\epsilon=\frac{\theta}{2}$ and summing up over $j=1,...,n,$  we transform (3.23) as follows:
\begin{align*}\frac{d}{dt}\sum_{j=1}^n\|w_j\|^2(t)\leq C \Big[\sum_{i=1}^n\{\|\Delta^2 u_i\|^2&\notag\\+\|\Delta^2 v_i\|^2+ \|u_i\|^2(t)+\|v_i\|^2(t)\}\Big]\sum_{j=1}^n\|w_j\|^2,\;i=1,..., n.
\end{align*}

Since by (3.17) and Lemma 3.1,$$\|\Delta^2 u_i\|^2(t), \|\Delta^2 v_i \|^2(t)\in L^1(\R^+)$$ and
$$\|u_i\|(t),\|v_i\|^2(t)\in L^1(\R^+),\;i=1,...,n,$$ then by  Lemma 2.2, 

$$ \|w_j\|^2(t)\equiv 0\;\;j=1,...,n,\; \text{for all}\;\; t>0.$$
Hence
\begin{align*}
 u_j(x,t)\equiv v_j(x,t);j=1,...,n.
 \end{align*}
The proof of Lemma 3.3, and consequently, Theorem 3.1 are complete.

\end{proof}

\end{proof}

\

\section{ Conclusions}

This work concerns formulation and solvability of initial-boundary value problems for the n-dimensional ($n$ is a natural number from the interval [2,7],) Kuramoto-Sivashinsky system (3.1)-(3.2) posed on smooth bounded domains. Theorem 3.1 contains  results on existence and uniqueness of global regular solutions as well as exponential decay of the $H^2(D_n)$-norm, where $D_n$ is a smooth bounded  domain in $\R^n$.  We define a set of admissible domains, where    destabilizing effects of terms $\Delta u_j$ are damped by dissipativity of $\Delta^2 u_j$ due to condition (3.9). This set contains "thin domains", see \cite{Iftimie,molinet,sell}, where some dimensions of $D_n$ are small while others may be  large. Since initial- boundary value problems studied in this work do not admit the first a priori estimate independent of $t,u_j,$ in order to prove the existence of global regular solutions, we put conditions (3.10) connecting geometrical properties of $D_n$ with initial data $u_{j0}$. 
Moreover, Theorem 3.1 provides "smoothing effect": initial data $u_{j0}\in H^2(D_n)\cap H^1_0(D_n)$ imply that
$ u_j\in L^2(\R^;H^4(D_n)).$\\

\section*{Conflict of Interests}

The author declares that there is no conflict of interest regarding the publication of this paper.

\medskip

\bibliographystyle{torresmo}

\begin{thebibliography}{99}
	
	
\bibitem{Adams}
\newblock R. A. Adams,
\newblock Sobolev Spaces.
\newblock Academic Press, New York, 1975.

\bibitem{kukavica}
\newblock Benachour S., Kukavica I., Rusin W., Ziane M.,
\newblock Anisotropic estimates for the rwo dimensional Kuramoto-Sivashinsky equation,
\newblock J. of Dynamics and Differential Equations, Springer Verlag, 26, pp. 461-476 2014.
          10.1007/s10884-014-9372-3. hal-00790207.

\bibitem{Iorio}
\newblock Biagioni H.A., Bona J.L., Iorio Jr. R.J. and Scialom M.,
\newblock On the Korteweg de Vries-Kuramoto-Sivashinsky Equation,
\newblock Adv. Diff. Eqs., 1(1), 1-20, 1996. 


\bibitem{gramchev}
\newblock Biagioni H.A., Gramchev T.,
\newblock Multidimensional Kuramoto-Sivashinsky type equations: Singular initial data and analytic regularity,
\newblock Matemática Contemporanea, 15, 21-42, 1998.





\bibitem{Guo}
\newblock Guo Boling,
\newblock The existence and nonexistence of a global solution for the initial value problem
          of generalized Kuramoto-Sivashinsky type equations,
\newblock J. of Mathematical Research and Exposition, vol. 11, N0 1,(1991), 57-69, 1991.

\bibitem{cousin}
\newblock Cousin A.T. and Larkin N.A.,
\newblock Kuramoto-Sivashinsky equation in domains with moving boundaries,
\newblock Portugaliae Mathematica, vol. 59, fasc. 3, 335-349,  2002.

\bibitem{Cross}
\newblock Cross M.C.,
\newblock Pattern formation outside of equilibrium,
\newblock Review of Modern Physics. 65(3), 851-1086,  1993.



\bibitem{friedman}
\newblock Friedman A.,
\newblock Partial differential equations,
\newblock Dover publication, Mineola, N.Y. 1997.

\bibitem{Iftimie}
\newblock  Iftimie D., Raugel G.,
\newblock Some Reults on the Navier-Stokes Equations in Thin 3D Domains,
\newblock Journal of Differential Equations 169, 281-331,  2001.

\bibitem{kuramoto}
\newblock Kuramoto Y. and Tsuzuki T.,
\newblock On the formation of dissipative structures in reaction-diffusion systems,
\newblock Progr. Theor. Phys., 54, 687-699,  1975.


\bibitem{Larkin}
\newblock Larkin N.A.,
\newblock Regularity and decay of solutions for the 3D Kuramoto-Sivashinsky equation posed on smooth domains and parallelepipeds.
\newblock Electron. J. Math., 3 (2022), 1-15. DOI: 10.47443/ejm.2022.002 

	

\bibitem{Larkin2}
\newblock Larkin N.A.,
\newblock Korteweg de Vries and Kuramoto-Sivashinsky equations in bounded domains.
\newblock J. Math. Anal. Appl., 297, 169-185,  2004. 

 
\bibitem{molinet} 
\newblock Molinet L.,
\newblock Local dissipativity in $L^2$ for the Kuramoto-Sivashinsky equation in spatial dimension 2, 
\newblock J. Dynam. Diff. Eq., 12, N0 3, 533-556,  2000.

\bibitem{temam1}
\newblock Nicolaenko B., Scheurer B. and Temam R.,
\newblock Some global dynamical properties of the Kuramoto-Sivashinsky equations: nonlinear stability and attractors,
\newblock Phys. D 16 (1985), 155-183,  1985.


\bibitem{otto}
\newblock Otto F.,
\newblock Optimal bounds on the Kuramoto-Sivashinsky equation,
\newblock J. Functional Analysis, 257, 2188-2245, 2009.



\bibitem{sell} 
\newblock Sell G.R. and Taboada M,
\newblock Local dissipativity and attractors for the Kuramoto-Sivashinsky equation in thin 2D domains,
\newblock Nonlin. Anal. 18, 671-687,  1992.


	
\bibitem{sivash}
\newblock Sivashinsky G.I.,
\newblock Nonlinear analysis of hydrodynamic instability in laminar flames-1. Derivation of basic equations,
\newblock Acta Astronauica, 4, 1177-1206,  1977.
		
		
	
\bibitem{steklov}
\newblock Steklov A.V.,
\newblock The problem of cooling of an heterogeneous rigid rod,
\newblock Communs. Kharkov Math. Soc., Ser. 2, 5, 136-181,  1896 (Russian).




	
\bibitem{temam2}
\newblock Temam R.,
\newblock Infinite-Dimensional Dynamical Systems in Mechanics and Physics,
\newblock  Springer, Berlin-Heidelberg, New York (1988).


 
\bibitem{zhang}
\newblock Jing Li, Bing-Yu Zhang and Zhixiong Zhang,
\newblock A nonhomogeneous boundary value problem for the Kuramoto-Sivashinsky equation in a quarter plane,
\newblock  arXiv:1607.00506v2 [math.AP] 14 Jul 2016.

\end{thebibliography}

\end{document}